\newtheorem{theorem}{Theorem}[section]
\theoremstyle{definition}
\DeclareMathOperator{\diag}{diag}
\newcommand{\CP}{\mathbbm{CP}}
\newcommand{\Complex}{\mathbbm{C}}
\newcommand{\calF}{{\mathcal F}}
\newcommand{\calE}{{\mathcal E}}
\newcommand{\calO}{{\mathcal O}}
\DeclareMathOperator{\Aut}{Aut}
\DeclareMathOperator{\Jac}{Jac}
\newcommand{\pitilde}{{\tilde{\pi}}}
\begin{document}


\title{The moduli space of stable rank 2 parabolic bundles over an
  elliptic curve with 3 marked points}

\author{David Boozer} 

\begin{abstract}
We explicitly describe the moduli space $M^s(X,3)$ of stable
rank 2 parabolic bundles over an elliptic curve $X$ with trivial
determinant bundle and 3 marked points.
Specifically, we exhibit $M^s(X,3)$ as a blow-up of an embedded
elliptic curve in $(\CP^1)^3$.
The moduli space $M^s(X,3)$ can also be interpreted as the $SU(2)$
character variety of the 3-punctured torus.
Our description of $M^s(X,3)$ reproduces the known Poincar\'{e}
polynomial for this space.
\end{abstract}

\date{\today}

\maketitle

\section{Introduction}

Given a curve $C$, one can define a moduli space $M^s(C,n)$ of stable
rank 2 parabolic bundles over $C$ with trivial determinant bundle and
$n$ marked points.
The space $M^s(C,n)$ has the structure of a smooth complex
manifold of dimension $3(g-1) + n$, where $g$ is the genus of the curve $C$.
In general, the space $M^s(C,n)$ depends on a positive real parameter
$\mu$ known as the \emph{weight}.
For $\mu$ sufficiently small ($\mu < 1/n$ will suffice), the space
$M^s(C,n)$ is independent of $\mu$, but as $\mu$ increases it may
cross critical values at which $M^s(C,n)$ undergoes certain birational
transformations \cite{Boden,Thaddeus}.
The moduli space $M^s(C,n)$ can also be interpreted as an
$SU(2)$-character variety, which is defined as the space of conjugacy
classes of $SU(2)$-representations of the fundamental group of
$C$ with $n$ punctures, where loops around the punctures are required
to correspond to $SU(2)$-matrices conjugate to
$\diag(e^{2\pi i \mu}, e^{-2\pi i\mu})$.
Moduli spaces of parabolic bundles on curves are natural objects of
study in algebraic geometry, and also play an important role in
low-dimensional topology.
In particular, these spaces have a canonical symplectic structure and
can be used to define Floer homology theories of links
\cite{Boozer-2,Hedden-1,Hedden-2,Horton}.

Explicit descriptions of $M^s(C,n)$ are known for small values of $n$
and $C$ a rational or elliptic curve.
For rational curves, it is well-known that for small weight we have
\begin{align*}
  M^s(\CP^1,0) &= M^s(\CP^1,1) = M^s(\CP^1,2) = \varnothing, \\
  M^s(\CP^1,3) &= \{pt\}, \\
  M^s(\CP^1,4) &= \CP^1 - \{\textup{3 points}\}, \\
  M^s(\CP^1,5) &= \CP^2 \# 4 \overline{\CP}^2.
\end{align*}
The structure of $M^s(\CP^1,6)$ for $\mu = 1/4$, corresponding to the
traceless character variety, was recently described by Kirk
\cite{Kirk}.
For an elliptic curve $X$, it is straightforward to show that for
small weight we have
\begin{align*}
  M^s(X,0) &= \varnothing, &
  M^s(X,1) &= \CP^1.
\end{align*}
It was recently shown by Vargas \cite{Vargas} that $M^s(X,2)$ is the
complement of an embedded elliptic curve in $(\CP^1)^2$.

Our goal in this paper is to explicitly describe the structure of
$M^s(X,3)$.
We prove the following result:

\begin{theorem}
\label{theorem:intro-main}
For small weight, the moduli space $M^s(X,3)$ for an elliptic curve
$X$ is a blow-up of an embedded elliptic curve in $(\CP^1)^3$.
\end{theorem}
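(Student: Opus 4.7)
The plan is to exhibit $M^s(X,3)$ as the claimed blow-up by constructing an explicit birational map $\Phi\colon M^s(X,3) \to (\CP^1)^3$ and then identifying its exceptional locus with an embedded elliptic curve.

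My first step would be to invoke Atiyah's classification of rank $2$ vector bundles on $X$ with trivial determinant, which separates the underlying bundles into three types: the decomposable bundles $L \oplus L^{-1}$ for $L \in \Jac^0(X)$ with $L \not\cong L^{-1}$; the four Atiyah-type bundles corresponding to the $2$-torsion line bundles; and the non-split self-extension of $\calO$ (up to twist). For each type I would enumerate the line subbundles of degree zero and use them to characterize parabolic stability for small weight; concretely, a parabolic structure should be unstable precisely when at least two of the three parabolic lines coincide with the fibers of a common destabilizing degree-zero line subbundle.

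Next I would define $\Phi$ by encoding the parabolic data as a triple of points in $(\CP^1)^3$: for bundles of the form $L \oplus L^{-1}$ the two sub-line bundles furnish a canonical trivialization $\mathbb{P}(E_{p_i}) \cong \CP^1$, well-defined up to the swap $L \leftrightarrow L^{-1}$ and up to a common rescaling that can be absorbed into the automorphism $\Complex^*$ acting diagonally on the three $\CP^1$'s with fixed points at $0$ and $\infty$. For the remaining bundles a choice of extension structure yields the required points in a parallel fashion. Quotienting by these automorphisms is what makes $\Phi$ descend to the moduli space, and this is the step I expect to mirror Vargas's construction in the $n=2$ case.

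Finally, I would analyze the fibers and image of $\Phi$. On the open locus whose underlying bundle is a generic $L \oplus L^{-1}$, $\Phi$ should be an isomorphism onto the complement of an embedded elliptic curve $\Sigma \subset (\CP^1)^3$ parametrizing the strictly semistable $S$-equivalence classes; above each point of $\Sigma$, the fiber should be a $\CP^1$ coming from the nontrivial extensions of $L^{-1}$ by $L$ that realize the $S$-equivalence class as a genuine stable parabolic bundle when decorated with parabolic structure. Such a $\CP^1$-bundle of exceptional divisors sitting over an elliptic curve inside $(\CP^1)^3$ is by definition the blow-up. The main obstacle will be the delicate case analysis at the non-split bundles and the $2$-torsion points, where automorphism groups jump and abstractly distinct parabolic structures may become isomorphic; reconciling this local analysis with the global blow-up picture, and then verifying that the resulting smooth projective variety has the expected Poincar\'{e} polynomial as a consistency check, is where I expect the bulk of the technical work to concentrate.
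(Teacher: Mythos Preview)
Your construction of $\Phi$ has a genuine gap. Recording the three parabolic lines $\ell_{p_1},\ell_{p_2},\ell_{p_3}$ in trivialized fibers does not by itself yield a map to $(\CP^1)^3$: for a fixed generic bundle $E=L\oplus L^{-1}$, after you quotient by the diagonal $\Complex^\times$-rescaling and the swap $L\leftrightarrow L^{-1}$ you are left with a two-dimensional space, not three, and you have nowhere encoded the isomorphism class $[E]$. The paper's map $\pi$ is built quite differently: the three $\CP^1$-factors are three copies of $M^{ss}(X)$, the first coordinate is $[E]$ itself, and the remaining two coordinates are the Hecke modifications $h_{e_2}(E,\ell_{p_1},\ell_{p_3})$ and $h_{e_1}(E,\ell_{p_2},\ell_{p_3})$. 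Because Hecke modifications are intrinsically automorphism-invariant, no quotient or normalization is needed, and this is precisely what makes the map land canonically in $(\CP^1)^3$.

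Your picture of the exceptional locus is also off. You propose that the collapsed $\CP^1$'s arise from nontrivial extensions of $L^{-1}$ by $L$, but on an elliptic curve $\Ext^1(L^{-1},L)\cong H^1(X,L^2)=0$ whenever $L^2\neq\calO$, so for generic $[L]$ there are no such extensions to supply a fiber. In the paper the exceptional divisor is instead the locus $\{\ell_{p_3}\ \text{bad}\}$: when $\ell_{p_3}=L_{p_3}$ sits inside a maximal sub-line-bundle, both Hecke coordinates $\pi_2,\pi_3$ become independent of $\ell_{p_1},\ell_{p_2}$ (Theorem~\ref{theorem:he-eval}), and the residual $\CP^1$ of freedom in $(\ell_{p_1},\ell_{p_2})$ modulo $\Aut(E)$ is detected by a third Hecke modification $h_{e_3}(E,\ell_{p_1},\ell_{p_2})$. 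The embedded elliptic curve is then the image of an injection $f\colon\Jac(X)\to (M^{ss}(X))^3$ built from the double cover $p\colon\Jac(X)\to M^{ss}(X)$ and two translations. So the missing ingredient in your plan is the Hecke-modification machinery; without it, neither the target map nor the identification of the exceptional fibers goes through as you describe.
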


To prove Theorem \ref{theorem:intro-main}, we make use of an explicit
description of Hecke modifications of rank 2 holomorphic vector
bundles on elliptic curves that is derived in \cite{Boozer}.
Roughly speaking, a Hecke modification is a way of locally
modifying a vector bundle near a point to obtain a new vector bundle.
The moduli space $M^s(X,3)$ plays an important role in a conjectural
Floer homology theory for links in lens spaces discussed in
\cite{Boozer}, and Theorem \ref{theorem:intro-main} was motivated by
this application.

The paper is organized as follows.
In Sections
\ref{sec:parabolic} and \ref{sec:vector},
we review the background material we will need on parabolic bundles
and vector bundles on elliptic curves.
In Section
\ref{sec:description},
we use Hecke-modification methods to explicitly
describe $M^s(X,3)$.
In Section
\ref{sec:relationship},
we relate $M^s(X,3)$ to $M^{ss}(X,2)$, the moduli space
of $S$-equivalence classes of rank 2 semistable parabolic bundles with
trivial determinant bundle and 2 marked points.
In Section
\ref{sec:poincare},
we use our description of $M^s(X,3)$ to reproduce the known
Poincar\'{e} polynomial for this space.

\section{Parabolic bundles}
\label{sec:parabolic}

The concept of a parabolic bundle was introduced in
\cite{Mehta-Seshadri}.
We will not need this concept in its full generality; rather, we will
consider only parabolic bundles of a certain restricted form, which is
discussed at greater length in \cite[Appendix B]{Boozer}.
For our purposes here, a rank $2$ parabolic bundle over a curve $C$
consists of a rank 2 holomorphic vector bundle $E$ over $C$ with
trivial determinant bundle, distinct marked points
$p_1, \cdots, p_n \in C$, a
line $\ell_{p_i} \in \mathbbm{P}(E_{p_i})$ in the fiber $E_{p_i}$ over
each marked point $p_i$, and a positive real parameter $\mu$ known as
the \emph{weight}.
For simplicity, we will suppress the curve $C$ and weight $\mu$ in the
notation and denote a parabolic bundle as
$(E,\ell_{p_1},\cdots,\ell_{p_n})$.

In order to describe the stability properties of parabolic bundles, it
is helpful to introduce some additional terminology.
Recall that the degree of the proper subbundles of a vector
bundle $E$ on a curve $C$ is bounded above.
Given a rank 2 holomorphic vector bundle $E$, we say that a line
$\ell_p \in \mathbbm{P}(E_p)$ is \emph{bad} if there is a line
subbundle $L$ of $E$ of maximal degree such that $\ell_p = L_p$, and
\emph{good} otherwise.
We say that lines
$\ell_{p_1} \in \mathbbm{P}(E_{p_1}), \cdots,
\ell_{p_n} \in \mathbbm{P}(E_{p_n})$ are
\emph{bad in the same direction} if there is a
line subbundle $L$ of $E$ of maximal degree such that
$\ell_{p_i} = L_{p_i}$ for $i=1, \cdots, n$.

Consider a parabolic bundle
$\calE = (E,\ell_{p_1},\cdots,\ell_{p_n})$.
Let $m$ denote the maximum number of lines of $\calE$ that are bad in
the same direction.
For sufficiently small weight ($\mu < 1/n$ will suffice), we
can characterize the stability and semistability of $\calE$ as
follows.
If $E$ is unstable, then $\calE$ is unstable.
If $E$ is semistable, then $\calE$ is stable if $m < n/2$, semistable
if $m \leq n/2$, and unstable if $m > n/2$.
Note that if $n$ is odd then stability and semistability are
equivalent.

We define a moduli space $M^s(C,n)$ of isomorphism classes of stable
parabolic bundles with $n$ marked points.
As with vector bundles, one can define a notion of $S$-equivalent
semistable parabolic bundles, and we define a moduli space
$M^{ss}(C,n)$ of $S$-equivalence classes of semistable parabolic
bundles.
For odd $n$ we have that $M^s(C,n) = M^{ss}(C,n)$.
For even $n$ we have that $M^s(C,n)$ is an open subset of
$M^{ss}(C,n)$.

\section{Vector bundles on elliptic curves}
\label{sec:vector}

Vector bundles on elliptic curves were classified by Atiyah
\cite{Atiyah}, and are well understood.
Here we briefly summarize the results regarding vector bundles on
elliptic curves that we will need; these results are either well-known
(see for example \cite{Atiyah,Iena,Teixidor,Tu}) or derived in
\cite[Section 5]{Boozer}.

\subsection{Line bundles}

Isomorphism classes of degree 0 line bundles on an elliptic curve $X$
are parameterized by the Jacobian $\Jac(X)$.
Given a basepoint $e \in X$, we define the \emph{Abel-Jacobi}
isomorphism $X \rightarrow \Jac(X)$, $p \mapsto [\calO(p-e)]$.
Given points $p, e \in X$, we define a \emph{translation map}
$\tau_{p-e}:\Jac(X) \rightarrow \Jac(X)$,
$[L] \mapsto [L \otimes \calO(p-e)]$.
We say that a line bundle $L$ is \emph{2-torsion} if $L^2 = \calO$.
There are four 2-torsion line bundles, which we
denote $L_i$ for $i=1,2,3,4$.

\subsection{Semistable rank 2 vector bundles}
\label{sec:vector-bundles}

For our purposes here, we need only consider semistable rank 2 vector
bundles on an elliptic curve $X$ with trivial determinant bundle.
There are three classes of such bundles.

First, we have vector bundles of the form $E = L \oplus L^{-1}$, where
$L$ is a degree 0 line bundle such that $L^2 \neq \calO$.
There are two bad lines $L_p, (L^{-1})_p \in \mathbbm{P}(E_p)$ in the
fiber $E_p$ over a point $p \in X$, and all other lines in
$\mathbbm{P}(E_p)$ are good.
The automorphism group $\Aut(E)$ of $E$ consists of $GL(2,\Complex)$
matrices of the form
\begin{align*}
  \left(\begin{array}{cc}
    A & 0 \\
    0 & D
  \end{array}\right).
\end{align*}
Each bad line $L_p, (L^{-1})_p \in \mathbbm{P}(E_p)$ is fixed by the
automorphisms of $E$,
and there is a unique (up to rescaling by a constant) automorphism
carrying any good line $\ell_p \in \mathbbm{P}(E_p)$ to any other good
line $\ell_p' \in \mathbbm{P}(E_p)$.

Second, we have four vector bundles of the form
$E = L_i \oplus L_i$, where $L_i$ is a 2-torsion line bundle.
All lines $\ell_p \in \mathbbm{P}(E_p)$ in the fiber $E_p$ over a
point $p \in X$ are bad.
The automorphism group $\Aut(E)$ of $E$ is $GL(2,\Complex)$, and
there is a unique (up to rescaling by a constant) automorphism
carrying any triple of lines
$(\ell_{p_1}, \ell_{p_2}, \ell_{p_3}) \in
\mathbbm{P}(E_{p_1}) \times \mathbbm{P}(E_{p_2}) \times
\mathbbm{P}(E_{p_3})$
such that no two lines are bad in the same direction to any other
triple of lines
$(\ell_{p_1}', \ell_{p_2}', \ell_{p_3}') \in
\mathbbm{P}(E_{p_1}) \times \mathbbm{P}(E_{p_2}) \times
\mathbbm{P}(E_{p_3})$
such that no two lines are bad in the same direction.

Third, we have four vector bundles of the form
$E = F_2 \otimes L_i$,
where $L_i$ is a 2-torsion line bundle and $F_2$ is the unique
non-split extension of $\calO$ by $\calO$:
\begin{eqnarray*}
\begin{tikzcd}
  0 \arrow{r} &
  \calO \arrow{r} &
  F_2 \arrow{r} &
  \calO \arrow{r} &
  0.
\end{tikzcd}
\end{eqnarray*}
There is a unique bad $(L_i)_p \in \mathbbm{P}(E_p)$ in the fiber
$E_p$ over a point $p \in X$, and all other lines in
$\mathbbm{P}(E_p)$ are good.
The automorphism group $\Aut(E)$ of $E$ consists of $GL(2,\Complex)$
matrices of the form
\begin{align*}
  \left(\begin{array}{cc}
    A & B \\
    0 & A
  \end{array}\right).
\end{align*}
The bad line $(L_i)_p \in \mathbbm{P}(E_p)$ is fixed by the
automorphisms of $E$,
and there is a unique (up to rescaling by a constant) automorphism
carrying any good line $\ell_p \in \mathbbm{P}(E_p)$ to any other good
line $\ell_p' \in \mathbbm{P}(E_p)$.

We define $M^{ss}(X)$ to be the moduli space of $S$-equivalence
classes of semistable rank 2 vector bundles on $X$ with trivial
determinant bundle.
In \cite{Tu} it is shown that $M^{ss}(X)$ is isomorphic to $\CP^1$, as
can be understood as follows.
The above classification shows that we can parameterize semistable
rank 2 vector bundles with trivial determinant bundle as
$L \oplus L^{-1}$ for $[L] \in \Jac(X)$, together with the four
bundles $F_2 \otimes L_i$.
The bundles $L \oplus L^{-1}$ and $L^{-1} \oplus L$ are isomorphic,
hence $S$-equivalent, and are thus identified in $M^{ss}(X)$.
One can show that the bundles $F_2 \otimes L_i$ and $L_i \oplus L_i$
are $S$-equivalent, and are thus identified in $M^{ss}(X)$.
It follows that $M^s(X)$ is the quotient of $\Jac(X)$
by the involution $[L] \mapsto [L^{-1}]$, which yields a space known
as the \emph{pillowcase} that is isomorphic to $\CP^1$.
We define a map $p:\Jac(X) \rightarrow M^{ss}(X)$,
$[L] \mapsto [L \oplus L^{-1}]$, which is a branched double-cover with
four branch points $p([L_i]) \in M^{ss}(X)$ corresponding to four
ramification points $[L_i] \in \Jac(X)$ that are fixed by the
involution $[L] \mapsto [L^{-1}]$.

\subsection{Hecke modifications of rank 2 vector bundles}

Given a rank 2 vector bundle $E$ over a curve $C$, distinct points
$p_1, \cdots, p_n \in C$, and lines
$\ell_{p_i} \in \mathbbm{P}(E_{p_i})$ for each point $p_i$, one can
perform a \emph{Hecke modification} of $E$ at each point $p_i$ using
data provided by the line $\ell_{p_i}$ so as to obtain a new vector
bundle that we will denote $H(E,\ell_{p_1},\cdots,\ell_{p_n})$.
One way to describe $H(E,\ell_{p_1},\cdots,\ell_{p_n})$ is as follows.
Let $\calE$ denote the sheaf of sections of $E$, and define a subsheaf
$\calF$ of $\calE$ whose set of sections over an open subset $U$ of
$X$ is given by
\begin{align*}
  \calF(U) = \{s \in \calE(U) \mid
  \textup{$p_i \in U \implies s(p_i) \in \ell_{p_i}$ for
    $i=1,\cdots,n$}\}.
\end{align*}
We define $H(E,\ell_{p_1},\cdots,\ell_{p_n})$ to be the vector bundle
whose sheaf of sections is $\calF$.

Hecke modifications of rank 2 vector bundles on elliptic curves are
described explicitly in \cite{Boozer}.
For our purposes here, all the results we will need can be
described in terms of a few properties of a certain map $h_e$, which
we define as follows.
Given a semistable rank 2 vector bundle $E$ with trivial determinant
bundle over an elliptic curve $X$, distinct points $p, q, e \in X$
such that $p + q = 2e$, and lines
$\ell_p \in \mathbbm{P}(E_p)$ and
$\ell_q \in \mathbbm{P}(E_q)$ that are
not bad in the same direction, we define
$h_e(E,\ell_p, \ell_q) \in M^{ss}(X)$ as
\begin{align*}
  h_e(E,\ell_p,\ell_q) = [H(E,\ell_p,\ell_q) \otimes \calO(e)].
\end{align*}
One can show that if the lines $\ell_p$ and $\ell_q$ are not bad in
the same direction, then $H(E,\ell_p,\ell_q) \otimes \calO(e)$ is in
fact a semistable vector bundle with trivial determinant bundle and
thus represents a point in $M^{ss}(X)$.
It is clear that the order of the lines doesn't matter in the
definition of $H(E,\ell_p,\ell_q)$, so
\begin{align*}
  h_e(E,\ell_p,\ell_q) = h_e(E,\ell_q,\ell_p).
\end{align*}
If $(E,\ell_p,\ell_q)$ and $(E',\ell_p',\ell_q')$ are isomorphic
parabolic bundles, one can show that
\begin{align*}
  h_e(E,\ell_p,\ell_q) =
  h_e(E',\ell_p',\ell_q').
\end{align*}
In particular, for $\phi \in \Aut(E)$ we  have
\begin{align*}
  h_e(E,\ell_p,\ell_q) =
  h_e(E,\phi(\ell_p),\phi(\ell_q)).
\end{align*}
If the line $\ell_p$ is good and $E \neq L_i \oplus L_i$, then
we have the following result:

\begin{theorem}[{\cite[Lemma 5.26]{Boozer}}]
\label{theorem:he-iso}
If $E = L \oplus L^{-1}$ for $L^2 \neq \calO$ or
$E = F_2 \otimes L_i$, and
$\ell_p \in \mathbbm{P}(E_p)$ is a good line, then the map
$\mathbbm{P}(E_q) \rightarrow M^{ss}(X)$,
$\ell_q \mapsto h_e(E,\ell_p,\ell_q)$ is an isomorphism.
\end{theorem}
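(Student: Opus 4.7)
My plan is to reduce the statement to a degree computation for a morphism $\CP^1 \to \CP^1$ and verify the degree equals $1$ by case analysis, using the explicit Hecke modification formulas of \cite{Boozer}.

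First I would check well-definedness on the entire source. Since $\ell_p$ is good, no line subbundle $L$ of maximal degree satisfies $\ell_p = L_p$, so no $\ell_q \in \mathbbm{P}(E_q)$ can be bad in the same direction as $\ell_p$, and $h_e(E, \ell_p, \ell_q)$ is defined for every $\ell_q$. Combined with $M^{ss}(X) \cong \CP^1$ from Section \ref{sec:vector-bundles}, the map is a morphism of smooth rational curves, and it suffices to show it has degree $1$.

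I would then split into the two structural cases from Section \ref{sec:vector-bundles}. For $E = L \oplus L^{-1}$ with $L^2 \neq \calO$, I would use the diagonal automorphism group of $E$ together with the identity $h_e(E, \ell_p, \ell_q) = h_e(E, \phi(\ell_p), \phi(\ell_q))$ to normalize $\ell_p$ to a standard good line, parameterize $\ell_q$ by an affine coordinate $t$ on $\mathbbm{P}(E_q)$, and extract $H(E, \ell_p, \ell_q) \otimes \calO(e)$ from the formulas of \cite{Boozer} as $L(t) \oplus L(t)^{-1}$ for an explicit rational family $L(t)$ in $\Jac(X)$. Composing with the branched double cover $\Jac(X) \to M^{ss}(X)$ would give a rational map of $\CP^1$ whose degree I would verify to be $1$. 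The case $E = F_2 \otimes L_i$ runs parallel: the upper-triangular stabilizer of a good line consists of scalars, so the same normalization strategy applies to the twisted non-split extension.

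The main obstacle is extracting the relevant explicit Hecke modification formulas from \cite{Boozer} and completing the degree-$1$ verification in each case. A cleaner conceptual alternative would be to construct an inverse directly: the sheaf inclusion $H(E, \ell_p, \ell_q) \hookrightarrow E$ recovers $\ell_q$ as the image of the induced map on fibers at $q$, so given $[F] \in M^{ss}(X)$ the problem reduces to showing that embeddings $F \otimes \calO(-e) \hookrightarrow E$ inducing the prescribed $\ell_p$ at $p$ are unique up to scalar, a uniqueness which is consistent with the structure of $\Aut(E)$ in both cases.
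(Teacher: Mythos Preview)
The paper does not prove this statement; it is quoted verbatim from \cite[Lemma 5.26]{Boozer} with no argument supplied, so there is no in-paper proof to compare against.

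That said, your primary approach has a genuine obstruction. You propose to write $H(E,\ell_p,\ell_q)\otimes\calO(e)$ as $L(t)\oplus L(t)^{-1}$ for ``an explicit rational family $L(t)$ in $\Jac(X)$'' parameterized by the affine coordinate $t$ on $\mathbbm{P}(E_q)\cong\CP^1$, and then compose with the branched cover $p:\Jac(X)\to M^{ss}(X)$. But $\Jac(X)$ is an elliptic curve, so every morphism from $\CP^1$ to $\Jac(X)$ is constant; hence no non-constant family $t\mapsto[L(t)]$ can exist, and the map $\ell_q\mapsto h_e(E,\ell_p,\ell_q)$ cannot factor through $\Jac(X)$ in the way you describe. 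The degree computation must be carried out directly on $M^{ss}(X)\cong\CP^1$ (for instance by producing a coordinate on the pillowcase and computing, or by checking injectivity using the values at the two bad lines $\ell_q=L_q$ and $\ell_q=(L^{-1})_q$ supplied by Theorem \ref{theorem:he-eval}), not via a lift to the Jacobian.

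Your alternative---recovering $\ell_q$ from the image of the fiber map of the sheaf inclusion $H(E,\ell_p,\ell_q)\hookrightarrow E$ at $q$, and then showing uniqueness of embeddings $F\otimes\calO(-e)\hookrightarrow E$ with prescribed image $\ell_p$ at $p$---is sound in outline and is the cleaner route. The uniqueness step is a $\Hom$-space dimension count that should go through case by case using the explicit form of $E$ and $\Aut(E)$ recorded in Section \ref{sec:vector-bundles}.
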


If the line $\ell_p$ is bad, then $h_e(E,\ell_p,\ell_q)$ is
uniquely determined by $E$ and the points $p$ and $e$:

\begin{theorem}[{\cite[Lemma 5.27]{Boozer}}]
\label{theorem:he-eval}
If $\ell_p \in \mathbbm{P}(E_p)$ is a bad line and
$\ell_q \in \mathbbm{P}(E_q)$ is not bad in the same direction as
$\ell_p$, then $h_e(E,\ell_p,\ell_q)$ is given by
\begin{align*}
  &h_e(L \oplus L^{-1},L_p,\ell_q) =
  (p \circ \tau_{p-e})([L]), &
  &h_e(L \oplus L^{-1},(L^{-1})_p,\ell_q) =
  (p \circ \tau_{e-p})([L]), \\
  &h_e(F_2 \otimes L_i,(L_i)_p,\ell_q) =
  (p \circ \tau_{p-e})([L_i]), &
  &h_e(L_i \oplus L_i,\ell_p,\ell_q) =
  (p \circ \tau_{p-e})([L_i]).
\end{align*}
\end{theorem}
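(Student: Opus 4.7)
The plan is to establish each of the four formulas by first reducing $H(E,\ell_p,\ell_q)$ to a short exact sequence of twisted line bundles, and then identifying the $S$-equivalence class of this bundle, tensored with $\calO(e)$, in $M^{ss}(X)$. The four cases are handled in parallel, differing only in the identification of the maximal-degree subbundle.

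For each listed bundle $E$ with bad line $\ell_p$, let $L' \subset E$ be the maximal-degree line subbundle satisfying $\ell_p = L'_p$, and set $L'' = E/L'$. The hypothesis that $\ell_q$ is not bad in the same direction as $\ell_p$ amounts to $\ell_q \neq L'_q$. A direct analysis of the defining subsheaf $\calF$ of $H(E,\ell_p,\ell_q)$ then shows: sections of $L'$ that satisfy the Hecke conditions must vanish at $q$ (since $L'_q \cap \ell_q = 0$), while the projection $E \to L''$ sends $\calF$ onto $L''(-p)$ (because $L'_q + \ell_q = E_q$ makes a lift available near $q$). This yields the exact sequence
\begin{align*}
  0 \to L'(-q) \to H(E,\ell_p,\ell_q) \to L''(-p) \to 0.
\end{align*}
Tensoring with $\calO(e)$ and using $p+q = 2e$ (so $e-q = p-e$ and $e-p = -(p-e)$), together with the identifications of $L'$ and $L''$ from Section~\ref{sec:vector-bundles}, exhibits $H(E,\ell_p,\ell_q)\otimes\calO(e)$ as an extension whose associated graded is exactly the direct sum appearing in the claimed formula. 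For $E = L_i \oplus L_i$, where every line in every fiber is bad and $L'$ is not canonical, I would first invoke the $GL(2,\Complex)$ action on non-same-direction pairs to reduce to $\ell_p = (L_i\oplus 0)_p$, $\ell_q = (0 \oplus L_i)_q$, for which the subsheaf literally becomes $L_i(-q)\oplus L_i(-p)$. The second formula follows from the first by swapping $L$ and $L^{-1}$.

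The remaining step is to conclude that the $S$-equivalence class of $H(E,\ell_p,\ell_q)\otimes\calO(e)$ equals that of its associated graded. This requires semistability, which holds because any line subbundle of positive degree would either be contained in $L'(-q)\otimes\calO(e)$ (of degree $0$, a contradiction) or inject into the quotient $L''(-p)\otimes\calO(e)$ (also of degree $0$, a contradiction). The one point requiring care is that at special parameter values the relevant $\Ext^1$ may be nonzero and the extension genuinely non-split; but semistability persists and the $S$-class is still that of the direct sum, so the formulas are unaffected. Verifying this semistability and the $S$-class equality cleanly in each of the four cases is the technical heart of the argument.
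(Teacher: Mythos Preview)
The paper does not actually prove this statement; it is quoted verbatim from \cite[Lemma 5.27]{Boozer} and used as a black box. So there is no in-paper argument to compare against. That said, your proposal is a correct and complete proof, and it is the natural direct argument.

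A few remarks on the details. Your key short exact sequence
\[
  0 \to L'(-q) \to H(E,\ell_p,\ell_q) \to L''(-p) \to 0
\]
is correctly derived: the identification $\calF \cap L' = L'(-q)$ follows exactly as you say from $L'_q \cap \ell_q = 0$, and surjectivity onto $L''(-p)$ is a local statement that uses only $L'_q + \ell_q = E_q$. After tensoring by $\calO(e)$ and using $e-q = p-e$, $e-p = -(p-e)$, the sub and quotient become $M$ and $M^{-1}$ with $M = L' \otimes \calO(p-e)$, and in each of the four cases this matches the stated value of $h_e$. Your semistability argument is also fine: any line subbundle of positive degree would either factor through $M$ or map nontrivially to $M^{-1}$, both of degree $0$, a contradiction; hence the $S$-class is that of $M \oplus M^{-1}$ regardless of whether the extension splits.

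For $E = L_i \oplus L_i$, your reduction via $\Aut(E) = GL(2,\Complex)$ is legitimate: the degree-$0$ line subbundles are exactly the images of the constant embeddings $(a,b):L_i \hookrightarrow L_i \oplus L_i$, so ``not bad in the same direction'' means $\ell_p$ and $\ell_q$ correspond to distinct points of $\CP^1$, and a single matrix sends them to $[1:0]$ and $[0:1]$. After that the subsheaf is visibly $L_i(-q) \oplus L_i(-p)$, as you wrote.
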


Note that
$(p \circ \tau_{p-e})([L_i]) = (p \circ \tau_{e-p})([L_i])$.
Note also that in Theorem \ref{theorem:he-eval} the line
$\ell_q \in \mathbbm{P}(E_q)$ is allowed to be bad, just not bad in
the same direction as $\ell_p \in \mathbbm{P}(E_p)$.
For example, we have
\begin{align*}
  h_e(L \oplus L^{-1}, L_p, L^{-1}_q) =
  (p \circ \tau_{p-e})([L]) =
  (p \circ \tau_{e-q})([L]).
\end{align*}

\section{Description of $M^s(X,3)$}
\label{sec:description}

We consider here the moduli space $M^s(X,3)$ of stable rank 2
parabolic bundles over an elliptic curve $X$ with trivial determinant
bundle and 3 marked points $p_1, p_2, p_3 \in X$.
If $[E,\ell_{p_1},\ell_{p_2},\ell_{p_3}] \in M^s(X,3)$,
then $E$ is semistable and no two of the lines
$\ell_{p_1},\ell_{p_2},\ell_{p_3}$ are bad in the same direction.
It follows that $E$ has one of the three forms described in
Section \ref{sec:vector-bundles};
that is, $E = L \oplus L^{-1}$ for $L^2 \neq \calO$,
$E = L_i \oplus L_i$, or $E = F_2 \otimes L_i$.
Choose points $e_1, e_2, e_3 \in X$ such that
\begin{align*}
  p_1 + p_2 &= 2e_3, &
  p_3 + p_1 &= 2e_2, &
  p_2 + p_3 &= 2e_1.
\end{align*}
Define a map
$\pi = (\pi_1,\pi_2,\pi_3):M^s(X,3) \rightarrow (M^{ss}(X))^3$ by
\begin{align*}
  \pi([E,\ell_{p_1},\ell_{p_2},\ell_{p_3}]) =
  ([E],\,h_{e_2}(E,\ell_{p_1},\ell_{p_3}),\,h_{e_1}(E,\ell_{p_2},\ell_{p_3})).
\end{align*}
Note that since $E$ is semistable and no two of the lines $\ell_{p_1}$,
$\ell_{p_2}$, $\ell_{p_3}$ are bad in the same direction, we can in
fact define $h_{e_2}(E,\ell_{p_1},\ell_{p_3})$ and
$h_{e_1}(E,\ell_{p_2},\ell_{p_3})$.
It is useful to decompose $M^s(X,3)$ as
\begin{align*}
  M^s(X,3) =
  \{\textup{$\ell_{p_3}$ good}\} \cup
  \{\textup{$\ell_{p_3}$ bad}\},
\end{align*}
where the open submanifold
$\{\textup{$\ell_{p_3}$ good}\}$ and the closed submanifold
$\{\textup{$\ell_{p_3}$ bad}\}$ consist of points
$[E,\ell_{p_1},\ell_{p_2},\ell_{p_3}] \in M^s(X,3)$ for which
$\ell_{p_3}$ is a good and bad line, respectively.
The open submanifold $\{\textup{$\ell_{p_3}$ good}\}$ is described by
the following result:

\begin{theorem}
\label{theorem:subset-good}
The restriction of $\pi:M^s(X,3) \rightarrow (M^{ss}(X))^3$ to
$\{\textup{$\ell_{p_3}$ good}\} \rightarrow
\pi(\{\textup{$\ell_{p_3}$ good}\})$ is an
isomorphism.
\end{theorem}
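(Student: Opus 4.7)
The plan is to exhibit an explicit inverse to $\pi$ on $\{\ell_{p_3} \text{ good}\}$ using Theorem~\ref{theorem:he-iso}, after first arranging that the underlying bundle $E$ can be recovered from its $S$-equivalence class $[E] \in M^{ss}(X)$.

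First observe that if $\ell_{p_3}$ is good then $E$ cannot be of the form $L_i \oplus L_i$, since every line in such a bundle is bad. Thus on $\{\ell_{p_3} \text{ good}\}$, $E$ is either $L \oplus L^{-1}$ with $L^2 \neq \calO$ or $F_2 \otimes L_i$, which are exactly the two cases covered by Theorem~\ref{theorem:he-iso}. I also claim that on this locus the class $[E] \in M^{ss}(X)$ determines $E$ up to isomorphism: in the generic case the representatives $L \oplus L^{-1}$ and $L^{-1} \oplus L$ are already isomorphic as vector bundles, and at each of the four branch points $p([L_i])$ the two $S$-equivalent representatives are $L_i \oplus L_i$ and $F_2 \otimes L_i$, the first of which has been ruled out.

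For injectivity, suppose $\pi([E, \ell_{p_1}, \ell_{p_2}, \ell_{p_3}]) = \pi([E', \ell'_{p_1}, \ell'_{p_2}, \ell'_{p_3}])$ with $\ell_{p_3}$ and $\ell'_{p_3}$ both good. By the preceding paragraph I may assume $E = E'$. From the description of $\Aut(E)$ in Section~\ref{sec:vector-bundles}, the automorphism group acts transitively on good lines in $\mathbbm{P}(E_{p_3})$, so I can choose $\phi \in \Aut(E)$ with $\phi(\ell'_{p_3}) = \ell_{p_3}$; replacing $(\ell'_{p_1}, \ell'_{p_2}, \ell'_{p_3})$ by its $\phi$-image produces an isomorphic parabolic bundle, so I may further assume $\ell'_{p_3} = \ell_{p_3}$. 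Using the symmetry $h_e(E, \ell_p, \ell_q) = h_e(E, \ell_q, \ell_p)$, the hypothesis then reads
\begin{align*}
h_{e_2}(E, \ell_{p_3}, \ell_{p_1}) &= h_{e_2}(E, \ell_{p_3}, \ell'_{p_1}), &
h_{e_1}(E, \ell_{p_3}, \ell_{p_2}) &= h_{e_1}(E, \ell_{p_3}, \ell'_{p_2}),
\end{align*}
and Theorem~\ref{theorem:he-iso}, with $\ell_{p_3}$ playing the role of the good line, forces $\ell_{p_1} = \ell'_{p_1}$ and $\ell_{p_2} = \ell'_{p_2}$.

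The same ingredients produce an inverse. Given $([E], x_1, x_2) \in \pi(\{\ell_{p_3} \text{ good}\})$, select any good line $\ell_{p_3} \in \mathbbm{P}(E_{p_3})$, and define $\ell_{p_1}$ (respectively $\ell_{p_2}$) as the unique preimage of $x_1$ (respectively $x_2$) under the isomorphism $\ell_q \mapsto h_{e_2}(E, \ell_{p_3}, \ell_q)$ (respectively $\ell_q \mapsto h_{e_1}(E, \ell_{p_3}, \ell_q)$) provided by Theorem~\ref{theorem:he-iso}. The injectivity argument shows that the class $[E, \ell_{p_1}, \ell_{p_2}, \ell_{p_3}]$ is independent of the choice of $\ell_{p_3}$, and the construction is algebraic, so it yields a morphism inverse to $\pi$. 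The main subtlety I anticipate is handling the two cases $E \cong L \oplus L^{-1}$ and $E \cong F_2 \otimes L_i$ uniformly, in particular verifying that the inverse is well defined globally across the branch points of the covering $p:\Jac(X) \to M^{ss}(X)$, where the generic and special types of bundle meet.
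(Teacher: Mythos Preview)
Your proof is correct and takes essentially the same approach as the paper: rule out $E \cong L_i \oplus L_i$, use the transitive action of $\Aut(E)$ on good lines to normalize $\ell_{p_3}$, and then invoke Theorem~\ref{theorem:he-iso} to recover $\ell_{p_1}$ and $\ell_{p_2}$. The paper packages this as a fiberwise isomorphism $\pi_1^{-1}([E]) \cap \{\ell_{p_3}\ \textup{good}\} \to \mathbbm{P}(E_{p_1}) \times \mathbbm{P}(E_{p_2})$ (onto its image) via a fixed choice of reference good line $\ell_{p_3}'$, but the content is the same, and the global regularity concern you flag at the branch points is not addressed explicitly in the paper's proof either.
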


\begin{proof}
If
$[E,\ell_{p_1},\ell_{p_2},\ell_{p_3}] \in
\{\textup{$\ell_{p_3}$ good}\}$,
then $E$ must have good lines, hence
$E = L \oplus L^{-1}$ for $L^2 \neq \calO$ or $E = F_2 \otimes L_i$.
For each point $[E] \in M^{ss}(X)$, choose a representative $E$ of
$[E]$ and a good line $\ell_{p_3}' \in \mathbbm{P}(E_{p_3})$.
We can define a map
$\pi_1^{-1}([E]) \cap \{\textup{$\ell_{p_3}$ good}\} \rightarrow
\mathbbm{P}(E_{p_1}) \times \mathbbm{P}(E_{p_2})$,
\begin{align*}
  [E,\ell_{p_1},\ell_{p_2},\ell_{p_3}] \mapsto
  (\phi(\ell_{p_1}), \phi(\ell_{p_2})),
\end{align*}
where $\phi$ is the unique (up to rescaling by a constant)
automorphism of $E$ such that $\phi(\ell_{p_3}) = \ell_{p_3}'$.
This map is an isomorphism onto its image, hence by Theorem
\ref{theorem:he-iso} the map
$(\pi_2,\pi_3):
\pi_1^{-1}([E]) \cap \{\textup{$\ell_{p_3}$ good}\} \rightarrow
(M^{ss}(X))^2$ is an
isomorphism onto its image.
\end{proof}

Next we consider the closed submanifold
$\{\textup{$\ell_{p_3}$ bad}\}$.
Elements of $\{\textup{$\ell_{p_3}$ bad}\}$ have one of three forms:
\begin{align*}
  [L \oplus L^{-1}, \ell_{p_1}, \ell_{p_2}, L_{p_3}], &&
  [F_2 \otimes L_i, \ell_{p_1}, \ell_{p_2}, (L_i)_{p_3}], &&
  [L_i \oplus L_i, \ell_{p_1}, \ell_{p_2}, \ell_{p_3}],
\end{align*}
where $L^2 \neq \calO$ and $L_i$ is a 2-torsion line bundle.
Note that elements of the form
$[L \oplus L^{-1}, \ell_{p_1}, \ell_{p_2}, (L^{-1})_{p_3}]$ can
be converted into the first of the three listed forms by applying the
isomorphism $\phi:L \oplus L^{-1} \rightarrow L^{-1} \oplus L$:
\begin{align*}
  [L \oplus L^{-1}, \ell_{p_1}, \ell_{p_2}, (L^{-1})_{p_3}] =
  [L^{-1} \oplus L, \phi(\ell_{p_1}), \phi(\ell_{p_2}),
    (L^{-1})_{p_3}] =
  [M \oplus M^{-1}, \phi(\ell_{p_1}), \phi(\ell_{p_2}), M_{p_3}],
\end{align*}
where we have defined $M = L^{-1}$ and used the fact that
$\phi((L^{-1})_{p_3}) = (L^{-1})_{p_3}$.

Recall that we defined a map
$\pi_1:M^s(X,3) \rightarrow M^{ss}(X)$,
$[E, \ell_{p_1}, \ell_{p_2}, \ell_{p_3}] \mapsto [E]$.
We can lift
$\pi_1:\{\textup{$\ell_{p_3}$ bad}\} \rightarrow M^{ss}(X)$ to the
branched double-cover $p:\Jac(X) \rightarrow M^{ss}(X)$ by using the
bad line $\ell_{p_3}$ to distinguish between distinct vector bundles
$L \oplus L^{-1}$ and $L^{-1} \oplus L$ that are identified in
$M^{ss}(X)$:
\begin{eqnarray*}
\begin{tikzcd}
  {} & 
  \Jac(X) \arrow{d}{p} \\
  \{\textup{$\ell_{p_3}$ bad}\} \arrow{ru}{\pitilde_1} \arrow{r}{\pi_1} &
  M^{ss}(X),
\end{tikzcd}
\end{eqnarray*}
where
$\pitilde_1:\{\textup{$\ell_{p_3}$ bad}\} \rightarrow \Jac(X)$ is
defined such that
\begin{align*}
  \pitilde_1([L \oplus L^{-1}, \ell_{p_1}, \ell_{p_2}, L_{p_3}])
  &= [L], &
  \pitilde_1([F_2 \otimes L_i, \ell_{p_1}, \ell_{p_2}, (L_i)_{p_3}])
  &=
  \pitilde_1([L_i \oplus L_i, \ell_{p_1}, \ell_{p_2}, \ell_{p_3}])
  = [L_i].
\end{align*}
Define a map $f:\Jac(X) \rightarrow (M^{ss}(X))^3$,
$f = (p,\,p \circ \tau_{p_3 - e_2},\,p \circ \tau_{p_3 - e_1})$.

\begin{theorem}
We have a commutative diagram
\begin{eqnarray*}
\begin{tikzcd}
  \{\textup{$\ell_{p_3}$ bad}\} \arrow{dr}{\pi}
  \arrow{d}[swap]{\pitilde_1} &
  {} \\
  \Jac(X) \arrow{r}{f} &
   (M^{ss}(X))^3.
\end{tikzcd}
\end{eqnarray*}
\end{theorem}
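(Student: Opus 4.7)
The plan is to verify commutativity pointwise by evaluating $\pi$ and $f \circ \pitilde_1$ on each of the three types of parabolic bundles in $\{\textup{$\ell_{p_3}$ bad}\}$ listed earlier, namely
\[
  [L \oplus L^{-1}, \ell_{p_1}, \ell_{p_2}, L_{p_3}], \quad
  [F_2 \otimes L_i, \ell_{p_1}, \ell_{p_2}, (L_i)_{p_3}], \quad
  [L_i \oplus L_i, \ell_{p_1}, \ell_{p_2}, \ell_{p_3}].
\]
In all three cases the first components agree by the very definition of $p$ and $\pitilde_1$: we have $p([L]) = [L \oplus L^{-1}]$ in the first case, and $p([L_i]) = [L_i \oplus L_i] = [F_2 \otimes L_i]$ in the other two, using the $S$-equivalence identification from Section \ref{sec:vector-bundles}.

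The main work is to match $\pi_2$ and $\pi_3$ with $p \circ \tau_{p_3 - e_2}$ and $p \circ \tau_{p_3 - e_1}$ applied to $\pitilde_1$. Here I would use the symmetry $h_e(E,\ell_p,\ell_q) = h_e(E,\ell_q,\ell_p)$ to put the bad line $\ell_{p_3}$ in the first argument slot, and then invoke Theorem \ref{theorem:he-eval} directly. For instance, in the first case,
\[
  \pi_2([L \oplus L^{-1}, \ell_{p_1}, \ell_{p_2}, L_{p_3}])
  = h_{e_2}(L \oplus L^{-1}, L_{p_3}, \ell_{p_1})
  = (p \circ \tau_{p_3 - e_2})([L]),
\]
which is exactly the second component of $f([L])$; the computation of $\pi_3$ is analogous with $e_2$ replaced by $e_1$. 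The cases $E = F_2 \otimes L_i$ and $E = L_i \oplus L_i$ proceed identically, since Theorem \ref{theorem:he-eval} gives the same formula $(p \circ \tau_{p_3 - e_k})([L_i])$ in each case.

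The only subtle point, and the one I would pause on, is the first case with $\ell_{p_3} = (L^{-1})_{p_3}$ rather than $L_{p_3}$. The remark immediately preceding the theorem statement shows that such a parabolic bundle is represented by $[M \oplus M^{-1}, \phi(\ell_{p_1}), \phi(\ell_{p_2}), M_{p_3}]$ with $M = L^{-1}$, so $\pitilde_1$ sends it to $[L^{-1}]$, and one must check that $\pi_2$ and $\pi_3$ likewise transform correctly. Because $h_e$ is invariant under parabolic-bundle isomorphism (so $h_e$ of the original and the rewritten bundle agree), applying Theorem \ref{theorem:he-eval} to the rewritten form yields $(p \circ \tau_{p_3 - e_k})([L^{-1}])$, which matches $f([L^{-1}])$ as required. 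Once this consistency is noted, the verification for all three types reduces to a direct application of Theorem \ref{theorem:he-eval} together with the symmetry of $h_e$, and I expect no further obstacles.
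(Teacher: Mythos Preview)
Your proposal is correct and follows essentially the same approach as the paper: both arguments reduce the commutativity check to a direct application of Theorem~\ref{theorem:he-eval} on each of the three bundle types, with the paper organizing the computation by fibers of $\pitilde_1$ rather than by bundle type. Your explicit mention of the symmetry $h_e(E,\ell_p,\ell_q)=h_e(E,\ell_q,\ell_p)$ and your treatment of the $(L^{-1})_{p_3}$ case are minor elaborations of points the paper leaves implicit (the latter having been absorbed into the discussion preceding the theorem), but the substance of the argument is the same.
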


\begin{proof}
The fiber of $\pitilde_1$ over a point
$[L] \in \Jac(X)$ such that $L^2 \neq \calO$ is
\begin{align*}
  \pitilde_1^{-1}([L]) = 
  \{[L \oplus L^{-1}, \ell_{p_1}, \ell_{p_2}, L_{p_3}] \}.
\end{align*}
From Theorem \ref{theorem:he-eval} it follows that
$\pi(\pitilde_1^{-1}([L])) = f([L])$.
The fiber of $\pitilde_1$ over the point $[L_i] \in \Jac(X)$ is
\begin{align*}
  \pitilde_1^{-1}([L_i]) = 
  \{[F_2 \otimes L_i, \ell_{p_1}, \ell_{p_2}, (L_i)_{p_3}] \} \cup
  \{[L_i \oplus L_i, \ell_{p_1}, \ell_{p_2}, \ell_{p_3}] \}.
\end{align*}
From Theorem \ref{theorem:he-eval} it follows that
\begin{align*}
  \pi([F_2 \otimes L_i, \ell_{p_1}, \ell_{p_2}, (L_i)_{p_3}]) =
  \pi([L_i \oplus L_i, \ell_{p_1}, \ell_{p_2}, \ell_{p_3}]) =
  f([L_i]).
\end{align*}
Thus $\pi(\pitilde_1^{-1}([L_i])) = f([L_i])$.
\end{proof}

\begin{theorem}
The map $f:\Jac(X) \rightarrow (M^{ss}(X))^3$ is injective.
\end{theorem}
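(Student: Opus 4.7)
The plan is to exploit the fact that $p: \Jac(X) \to M^{ss}(X)$ is a double cover whose fiber over any point consists of an inverse pair $\{[M], [M^{-1}]\}$, so equality of images under $p$ translates into a simple two-case dichotomy. The first coordinate of $f$ reduces us to comparing $L$ with $L^{-1}$, and the second coordinate then separates the remaining possibilities.

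Concretely, suppose $f([L]) = f([L'])$. Equality of first coordinates gives $p([L]) = p([L'])$, so $[L'] \in \{[L], [L^{-1}]\}$. If $[L'] = [L]$ we are done, so assume $[L'] = [L^{-1}]$. Equality of the second coordinate then reads
\begin{align*}
  p([L \otimes \calO(p_3 - e_2)]) = p([L^{-1} \otimes \calO(p_3 - e_2)]),
\end{align*}
and again the double-cover property yields two cases: either
\begin{align*}
  [L \otimes \calO(p_3 - e_2)] = [L^{-1} \otimes \calO(p_3 - e_2)],
\end{align*}
which forces $L^2 = \calO$, hence $[L] = [L^{-1}] = [L']$; or
\begin{align*}
  [L \otimes \calO(p_3 - e_2)] = [L \otimes \calO(e_2 - p_3)],
\end{align*}
which gives $\calO(2p_3 - 2e_2) = \calO$, i.e.\ $2p_3 = 2e_2$ in $\Jac(X)$ via Abel--Jacobi. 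Using $2e_2 = p_1 + p_3$ from the definition of the $e_i$, this would force $p_3 = p_1$, contradicting the distinctness of the marked points. Therefore only the first sub-case can occur, yielding $[L] = [L']$.

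The main obstacle in this approach, such as it is, is simply keeping straight the relation between the double cover $p$ and translations on $\Jac(X)$; the only geometric input required beyond that bookkeeping is the distinctness of $p_1$ and $p_3$, which rules out the ``translation'' branch of the second dichotomy. Note that the third coordinate of $f$ would provide an entirely parallel argument using $p_2 \neq p_3$; either coordinate alone suffices for injectivity.
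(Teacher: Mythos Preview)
Your proof is correct and follows exactly the same approach as the paper: use the first coordinate to reduce to $[L'] = [L^{-1}]$, then use the second coordinate and the relation $2e_2 = p_1 + p_3$ to reach the contradiction $p_1 = p_3$. Your closing remark that the third coordinate is redundant is accurate and consistent with the paper's argument, which likewise never invokes it.
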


\begin{proof}
Take $[L], [M] \in \Jac(X)$ such that $f([L]) = f([M])$.
Projecting onto the first factor of $(M^{ss}(X))^3$, we find that
either $[M] = [L]$ or $[M] = [L^{-1}]$.
If $[M] = [L^{-1}]$, then projecting onto the second factor of
$(M^{ss}(X))^3$ gives either
$[L \otimes \calO(p_3-e_2)] = [L^{-1} \otimes \calO(p_3-e_2)]$ or
$[L \otimes \calO(p_3-e_2)] = [L \otimes \calO(e_2-p_3)]$.
In the first case $[L] = [L^{-1}]$.
The second case cannot actually occur, since otherwise
$2p_3 = 2e_2 = p_3 + p_1$ and thus $p_1 = p_3$, contradiction.
\end{proof}

Given a point
$[E,\ell_{p_1},\ell_{p_2},\ell_{p_3}] \in
\{\textup{$\ell_{p_3}$ bad}\}$,
we have that $E$ is semistable and $\ell_{p_1}$ and $\ell_{p_2}$
cannot be bad in the same direction, hence we can define a map
$h:\{\textup{$\ell_{p_3}$ bad}\} \rightarrow M^{ss}(X)$,
\begin{align*}
  h([E, \ell_{p_1}, \ell_{p_2}, \ell_{p_3}]) =
  h_{e_3}(E, \ell_{p_1}, \ell_{p_2}).
\end{align*}

\begin{theorem}
\label{theorem:subset-bad}
The map
$(\pitilde_1,h):\{\textup{$\ell_{p_3}$ bad}\} \rightarrow
\Jac(X) \times M^{ss}(X)$ is an isomorphism.
\end{theorem}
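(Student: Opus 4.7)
The plan is to prove $(\tilde{\pi}_1, h)$ is an isomorphism by showing that, for each $[L] \in \Jac(X)$, the restriction of $h$ to the fiber $\tilde{\pi}_1^{-1}([L])$ is a bijection onto $M^{ss}(X)$, and then upgrading this set-theoretic bijection to an isomorphism of varieties. The fiber analysis splits according to whether $L^2 \neq \calO$ or $[L] = [L_i]$ is $2$-torsion, so I would carry out each case separately, using Theorems \ref{theorem:he-iso} and \ref{theorem:he-eval} to compute images of generic lines and of bad lines, respectively.

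For $[L]$ with $L^2 \neq \calO$, the fiber consists of classes $[L \oplus L^{-1}, \ell_{p_1}, \ell_{p_2}, L_{p_3}]$. Since $\ell_{p_3} = L_{p_3}$ is bad in the direction of $L$, stability forbids $\ell_{p_1} = L_{p_1}$ and $\ell_{p_2} = L_{p_2}$, as well as simultaneously $\ell_{p_1} = (L^{-1})_{p_1}$ and $\ell_{p_2} = (L^{-1})_{p_2}$. I would split the fiber according to whether $\ell_{p_1}$ is good or $\ell_{p_1} = (L^{-1})_{p_1}$. In the good case, the diagonal automorphisms of $L \oplus L^{-1}$ fix $L_{p_3}$ and act transitively (up to rescaling) on good lines of $\mathbbm{P}(E_{p_1})$; after normalizing $\ell_{p_1}$, the sub-fiber is parameterized by $\ell_{p_2} \in \mathbbm{P}(E_{p_2}) \setminus \{L_{p_2}\}$, and Theorems \ref{theorem:he-iso} and \ref{theorem:he-eval} combine to give an injection of $h$ onto $M^{ss}(X) \setminus \{(p \circ \tau_{p_2 - e_3})([L])\}$. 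In the $\ell_{p_1} = (L^{-1})_{p_1}$ case, stability forces $\ell_{p_2}$ to be good and the diagonal automorphisms collapse all such pairs into a single class, which by Theorem \ref{theorem:he-eval} maps to $(p \circ \tau_{e_3 - p_1})([L])$. The identity $p_1 + p_2 = 2 e_3$ forces $\tau_{p_2 - e_3} = \tau_{e_3 - p_1}$, so this exceptional class plugs exactly the missing point.

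For $[L] = [L_i]$, the fiber decomposes as the family $[F_2 \otimes L_i, \ell_{p_1}, \ell_{p_2}, (L_i)_{p_3}]$ (with $\ell_{p_1}, \ell_{p_2}$ forced good by stability) and the single class $[L_i \oplus L_i, \ell_{p_1}, \ell_{p_2}, \ell_{p_3}]$ (using that $\Aut(L_i \oplus L_i) = GL(2, \Complex)$ acts transitively on triples of lines with no two bad in the same direction). An entirely analogous argument applies: Theorem \ref{theorem:he-iso} injects the $F_2 \otimes L_i$ family onto $M^{ss}(X) \setminus \{(p \circ \tau_{p_2 - e_3})([L_i])\}$, Theorem \ref{theorem:he-eval} sends the $L_i \oplus L_i$ class to $(p \circ \tau_{p_1 - e_3})([L_i])$, and $L_i^2 = \calO$ combined with $p_1 + p_2 = 2 e_3$ shows these two points of $M^{ss}(X)$ coincide. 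To promote the combined set-theoretic bijection to an isomorphism, I would construct a holomorphic inverse explicitly: Theorem \ref{theorem:he-iso} recovers the good-$\ell_{p_1}$ piece of each fiber as a family over $\Jac(X)$ parameterized by $M^{ss}(X)$, and the discrete exceptional locus can be patched in by continuity across the subvariety of "special" points.

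The main obstacle is the bookkeeping: verifying exactly which $(\ell_{p_1}, \ell_{p_2})$ pairs are excluded by the stability condition and how the $\Aut(E)$-action collapses them to isomorphism classes, especially when $E = L \oplus L^{-1}$ and both bad and good lines appear. The key algebraic coincidence making everything fit is the identity $\tau_{p_2 - e_3} = \tau_{e_3 - p_1}$ forced by $p_1 + p_2 = 2 e_3$: it guarantees that the image of the exceptional class where $\ell_{p_1}$ is bad lands precisely on the point of $M^{ss}(X)$ missing from the good-$\ell_{p_1}$ family, so the two pieces fit together into a bijection with no overlap or gap.
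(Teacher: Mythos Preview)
Your proposal is correct and follows essentially the same approach as the paper: analyze $(\tilde\pi_1,h)$ fiber-by-fiber over $\Jac(X)$, use the description of $\Aut(E)$ to parameterize each fiber, and invoke Theorems \ref{theorem:he-iso} and \ref{theorem:he-eval} to show $h$ identifies each fiber with $M^{ss}(X)$. The only cosmetic difference is that the paper first writes down explicit affine coordinates $(z_1,z_2)$ on each fiber and uses a symmetric three-stratum decomposition (according to whether $\ell_{p_1}$, $\ell_{p_2}$, or neither equals the second bad line), whereas you go directly to the bijection via the asymmetric two-stratum split on $\ell_{p_1}$; both organize the same computation, and your observation that $\tau_{p_2-e_3}=\tau_{e_3-p_1}$ is exactly what makes the pieces match in either version.
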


\begin{proof}
The fiber of $\pitilde_1$ over a point $[L] \in \Jac(X)$ such that
$L^2 \neq \calO$ is
\begin{align*}
  \pitilde_1^{-1}([L]) = 
  \{[L \oplus L^{-1}, \ell_{p_1}, \ell_{p_2}, L_{p_3}] \}.
\end{align*}
We will argue that $\pitilde_1^{-1}([L])$ is isomorphic to $\CP^1$.
Choose a local trivialization of $E := L \oplus L^{-1}$ over an open
set containing $p_1$ and $p_2$ so as to obtain identifications
$\psi_i:\mathbbm{P}(E_{p_i}) \rightarrow \CP^1$ for $i=1,2$.
We can choose the local trivialization such that
$\psi_i(L_{p_i}) = \infty$ and $\psi_i((L^{-1})_{p_i}) = 0$.
Define $z_i = \psi_i(\ell_{p_i})$ and note that
$(z_1, z_2) \in \Complex^2 - \{(0,0)\}$.
An automorphism of $E$ induces the transformation
$(z_1, z_2) \mapsto a(z_1,z_2)$ for $a \in \Complex^\times$, hence
we have an isomorphism
\begin{align*}
  \pitilde_1^{-1}([L]) =
  \{[L \oplus L^{-1}, \ell_{p_1}, \ell_{p_2}, L_{p_3}] \} \rightarrow
  \CP^1, &&
     [L \oplus L^{-1}, \ell_{p_1}, \ell_{p_2}, L_{p_3}] \mapsto
     [z_1 : z_2].
\end{align*}
A canonical version of this statement is that the restriction of
$h:\{\textup{$\ell_{p_3}$ bad}\} \rightarrow M^{ss}(X)$
to $\pitilde_1^{-1}([L])$ gives an isomorphism
$\pitilde_1^{-1}([L]) \rightarrow M^{ss}(X)$.
In particular, from Theorems \ref{theorem:he-iso} and
\ref{theorem:he-eval} it follows that
\begin{align*}
  &h(\{[L \oplus L^{-1}, \ell_{p_1}, \ell_{p_2}, L_{p_3}] \mid
  \ell_{p_1} \neq (L^{-1})_{p_1},\,
  \ell_{p_2} \neq (L^{-1})_{p_2}\}) =
  M^{ss}(X) -
  \{
  (p \circ \tau_{e_3 - p_1})([L]),\,
  (p \circ \tau_{e_3 - p_2})([L])\}, \\
  &h(\{[L \oplus L^{-1}, \ell_{p_1}, \ell_{p_2}, L_{p_3}] \mid
  \ell_{p_1} = (L^{-1})_{p_1},\,
  \ell_{p_2} \neq (L^{-1})_{p_2}\}) =
  \{
  (p \circ \tau_{e_3 - p_1})([L])\}, \\
  &h(\{[L \oplus L^{-1}, \ell_{p_1}, \ell_{p_2}, L_{p_3}] \mid
  \ell_{p_1} \neq (L^{-1})_{p_1},\,
  \ell_{p_2} = (L^{-1})_{p_2}\}) =
  \{
  (p \circ \tau_{e_3 - p_2})([L])\}.
\end{align*}

The fiber of $\pitilde_1$ over the point $[L_i] \in \Jac(X)$ is
\begin{align*}
  \pitilde_1^{-1}([L_i]) = 
  \{[F_2 \otimes L_i, \ell_{p_1}, \ell_{p_2}, (L_i)_{p_3}] \} \cup
  \{[L_i \oplus L_i, \ell_{p_1}, \ell_{p_2}, \ell_{p_3}] \}.
\end{align*}
Note that
$\{[L_i \oplus L_i, \ell_{p_1}, \ell_{p_2}, \ell_{p_3}] \}$
consists of a single point, since there is a unique (up to rescaling
by a constant) automorphism of
$L_i \oplus L_i$ that induces an isomorphism of any pair of stable
parabolic bundles
$(L_i \oplus L_i, \ell_{p_1}, \ell_{p_2}, \ell_{p_3})$ and
$(L_i \oplus L_i, \ell_{p_1}', \ell_{p_2}', \ell_{p_3}')$.
We will argue that
$\{[F_2 \otimes L_i, \ell_{p_1}, \ell_{p_2}, (L_i)_{p_3}] \}$ is
isomorphic to $\Complex$.
Choose a local trivialization of $E := F_2 \otimes L_i$ over an open
set containing $p_1$ and $p_2$ so as to obtain identifications
$\psi_i:\mathbbm{P}(E_{p_i}) \rightarrow \CP^1$ for $i=1,2$.
We can choose the local trivialization such that
$\psi_i((L_i)_{p_i}) = \infty$.
Define $z_i = \psi_i(\ell_{p_i})$ and note that
$(z_1, z_2) \in \Complex^2$.
An automorphism of $E$ induces the transformation
$(z_1, z_2) \mapsto (z_1 + b, z_2 + b)$ for $b \in \Complex$, hence
we have an isomorphism
\begin{align*}
  \{[F_2 \otimes L_i, \ell_{p_1}, \ell_{p_2}, L_{p_3}] \} \rightarrow
  \Complex, &&
   [F_2 \otimes L_i, \ell_{p_1}, \ell_{p_2}, L_{p_3}] \mapsto
     z_2 - z_1.
\end{align*}
A canonical version of these results is that the restriction of
$h:\{\textup{$\ell_{p_3}$ bad}\} \rightarrow M^{ss}(X)$
to $\pitilde_1^{-1}([L_i])$ gives an isomorphism
$\pitilde_1^{-1}([L_i]) \rightarrow M^{ss}(X)$.
In particular, from Theorems \ref{theorem:he-iso} and
\ref{theorem:he-eval} it follows that
\begin{align*}
  &h(\{[F_2 \otimes L_i, \ell_{p_1}, \ell_{p_2}, L_{p_3}]\}) =
  M^{ss}(X) -
  \{(p \circ \tau_{p_1 - e_3})([L_i]) \}, \\
  &h(\{[L_i \oplus L_i, \ell_{p_1}, \ell_{p_2}, \ell_{p_3}] \}) =
  \{ (p \circ \tau_{p_1 - e_3})([L_i]) \}.
\end{align*}
Note that
$(p \circ \tau_{p_1 - e_3})([L_i]) =
(p \circ \tau_{e_3 - p_1})([L_i]) =
(p \circ \tau_{p_2 - e_3})([L_i])$.
\end{proof}

Theorems \ref{theorem:subset-good}--\ref{theorem:subset-bad} prove
Theorem \ref{theorem:intro-main} from the Introduction.

\section{Relationship between $M^s(X,3)$ and $M^{ss}(X,2)$}
\label{sec:relationship}

In \cite{Vargas} it is shown that $M^{ss}(X,2)$ is isomorphic to
$(\CP^1)^2$.
From our perspective, we can describe this result by defining a map
$M^{ss}(X,2) \rightarrow (M(X)^{ss})^2$,
\begin{align*}
  [E,\ell_{p_1}, \ell_{p_2}] \mapsto
  ([E],\,h_{e_3}(E,\ell_{p_1},\ell_{p_2})).
\end{align*}
One can show that this map is an isomorphism.

We can relate the closed subset $\{\textup{$\ell_{p_3}$ bad}\}$ of
$M^s(X,3)$ to the moduli space $M^{ss}(X,2)$ as follows.
Define a map $\{\textup{$\ell_{p_3}$ bad}\} \rightarrow M^{ss}(X,2)$,
\begin{align*}
  [E, \ell_{p_1}, \ell_{p_2}, \ell_{p_3}] \mapsto
  [E, \ell_{p_1}, \ell_{p_2}].
\end{align*}
We have a commutative diagram
\begin{eqnarray*}
\begin{tikzcd}
  \{\textup{$\ell_{p_3}$ bad}\} \arrow{r}
  \arrow{d}[swap]{\pitilde_1} &
  M^{ss}(X,2) \arrow{d} \\
  \Jac(X) \arrow{r}{p} &
  M^{ss}(X),
\end{tikzcd}
\end{eqnarray*}
where we have defined a map
$M^{ss}(X,2) \rightarrow M^{ss}(X)$,
$[E,\ell_{p_1},\ell_{p_2}] \mapsto [E]$.

\section{Poincar\'{e} polynomial of $M^s(X,3)$}
\label{sec:poincare}

The Poincar\'{e} polynomial of $M^s(C,n)$ is given in
\cite[Theorem 3.8]{Street} for the case $\mu=1/4$, corresponding to
the traceless character variety, and $n$ odd:
\begin{align}
  \label{eqn:pt-c-n}
  P_t(M^s(C,n)) =
  \frac{(1 + t^2)^n (1 + t^3)^{2g} -
    2^{n - 1} t^{2g + n - 1} (1 + t)^{2g} (1 + t^2)}
  {(1 - t^2)(1 - t^4)},
\end{align}
where $g$ is the genus of $C$.
In fact, the results of \cite{Street} are stated for parabolic bundles
with fixed determinant bundle of odd degree, but since
$\mu=1/4$, corresponding to a traceless character variety, the results
also hold for the moduli space $M^s(C,n)$ for
which the determinant bundle of the parabolic bundles is trivial.
For an elliptic curve $X$ with $3$ marked points, equation
(\ref{eqn:pt-c-n}) gives
\begin{align}
  \label{eqn:pt-x-3}
  P_t(M^s(X,3)) &= 1 + 4t^2 + 2t^3 + 4t^4 + t^6.
\end{align}

We can reproduce equation (\ref{eqn:pt-x-3}) using our explicit
description of $M^s(X,3)$.
Since $\pi:M^s(X,3) \rightarrow (M^{ss}(X))^3$ restricts to an
isomorphism
$M^s(X,3) - \{\textup{$\ell_{p_3}$ bad}\} \rightarrow
(M^{ss}(X))^3 - \pi(\{\textup{$\ell_{p_3}$ bad}\})$,
we obtain the following equation for the Poincar\'{e} polynomials for
cohomology with compact supports:
\begin{align}
  \label{eqn:pt-1}
  P_t(M^s(X,3) - \{\textup{$\ell_{p_3}$ bad}\}) =
  P_t((M^{ss}(X))^3 - \pi(\{\textup{$\ell_{p_3}$ bad}\})).
\end{align}
From the long exact sequence for cohomology with compact supports, we
have
\begin{align}
  \label{eqn:pt-2}
  P_t(M^s(X,3) - \{\textup{$\ell_{p_3}$ bad}\}) &=
  P_t(M^s(X,3)) - P_t(\{\textup{$\ell_{p_3}$ bad}\}), \\
  \label{eqn:pt-3}
  P_t((M^{ss}(X))^3 - \pi(\{\textup{$\ell_{p_3}$ bad}\})) &=
  P_t((M^{ss}(X))^3) - P_t(\pi(\{\textup{$\ell_{p_3}$ bad}\})).
\end{align}
We have that $M^{ss}(X)$ is isomorphic to $\CP^1$,
$\pi(\{\textup{$\ell_{p_3}$ bad}\})$ isomorphic to $\Jac(X)$, and
$\{\textup{$\ell_{p_3}$ bad}\}$ is isomorphic to
$\Jac(X) \times M^{ss}(X)$, so
\begin{align}
  \label{eqn:pt-4}
  P_t(M^{ss}(X)^3) &= (1 + t^2)^3, &
  P_t(\pi(\{\textup{$\ell_{p_3}$ bad}\})) &= 1 + 2t + t^2, &
  P_t(\{\textup{$\ell_{p_3}$ bad}\}) &= (1 + 2t + t^2)(1 + t^2).
\end{align}
Combining equations (\ref{eqn:pt-1})--(\ref{eqn:pt-4}), we reproduce
equation (\ref{eqn:pt-x-3}) for $P_t(M^s(X,3))$.

\bibliographystyle{abbrv}
\bibliography{ms-x-3}

\begin{thebibliography}{10}

\bibitem{Atiyah}
M.~F. Atiyah.
\newblock Vector bundles over an elliptic curve.
\newblock {\em Proc. London Math. Soc.}, 7:414--452, 1957.

\bibitem{Boden}
H.~U. Boden and Y.~Hu.
\newblock Variations of moduli of parabolic bundles.
\newblock {\em Math. Ann.}, 301:539--559, 1995.

\bibitem{Boozer}
D.~Boozer.
\newblock {Hecke} modifications for rational and elliptic curves.
\newblock {\em Algeb. Geom. Topol., to appear; arXiv preprint
  arXiv:1805.11184v2}, 2018.

\bibitem{Boozer-2}
D.~Boozer.
\newblock Holonomy perturbations of the {Chern}--{Simons} functional for lens
  spaces.
\newblock {\em arXiv preprint arXiv:1811.01536}, 2018.

\bibitem{Hedden-1}
M.~Hedden, C.~Herald, and P.~Kirk.
\newblock The pillowcase and perturbations of traceless representations of knot
  groups.
\newblock {\em Geom. Topol.}, 18(1):211--287, 2014.

\bibitem{Hedden-2}
M.~Hedden, C.~Herald, and P.~Kirk.
\newblock The pillowcase and traceless representations of knot groups {II}: a
  {Lagrangian}-{Floer} theory in the pillowcase.
\newblock {\em J. Symplect. Geom.}, 18(3):721--815, 2018.

\bibitem{Horton}
H.~T. Horton.
\newblock A symplectic instanton homology via traceless character varieties.
\newblock {\em arXiv preprint arXiv:1611.09927v2}, 2016.

\bibitem{Iena}
O.~Iena.
\newblock Vector bundles on elliptic curves and factors of automorphy.
\newblock {\em Rend. Istit. Mat. Univ. Trieste}, 43:61--94, 2011.

\bibitem{Kirk}
P.~Kirk.
\newblock On the traceless {$SU(2)$} character variety of the 6-punctured
  2-sphere.
\newblock {\em J. Knot Theory Ram.}, 26(2):1740009, 2017.

\bibitem{Mehta-Seshadri}
V.~B. Mehta and C.~S. Seshadri.
\newblock Moduli of vector bundles on curves with parabolic structures.
\newblock {\em Math. Ann.}, 248:205--239, 1980.

\bibitem{Street}
E.~J. Street.
\newblock Recursive relations in the cohomology ring of moduli spaces of rank 2
  parabolic bundles on the {Riemann} sphere.
\newblock {\em arXiv preprint arXiv:1205.1730}, 2012.

\bibitem{Teixidor}
M.~Teixidor~i Bigas.
\newblock Vector bundles on curves.
\newblock \url{http://emerald.tufts.edu/~mteixido/files/vectbund.pdf}.
\newblock Accessed 2018-05-23.

\bibitem{Thaddeus}
M.~Thaddeus.
\newblock Geometric invariant theory and flips.
\newblock {\em J. Amer. Math. Soc.}, 9:691--723, 1996.

\bibitem{Tu}
L.~W. Tu.
\newblock Semistable bundles over an elliptic curve.
\newblock {\em Adv. Math.}, 98:1--26, 1993.

\bibitem{Vargas}
N.~F. Vargas.
\newblock Geometry of the moduli of parabolic bundles on elliptic curves.
\newblock {\em Trans. Am. Math., to appear; arXiv preprint arXiv:1611.05417},
  2016.

\end{thebibliography}

\end{document}